\documentclass[12pt, 14paper,reqno]{amsart}
\usepackage{amsmath,amsfonts,amssymb}
\usepackage[breaklinks]{hyperref}
\usepackage{graphicx}
\usepackage{longtable}
\usepackage{array}
\usepackage{caption}
\usepackage{xcolor}
\makeatletter
\@namedef{subjclassname@2020}{%
  \textup{2020} Mathematics Subject Classification}
\makeatother

\theoremstyle{plain}
\theoremstyle{plain}

\newtheorem{theorem}{Theorem}
\newtheorem{lemma}{Lemma}

\numberwithin{equation}{section}

\newtheorem{thma}{Theorem}

\theoremstyle{proof}

\numberwithin{equation}{section}

\begin{document} 

\title[On the class number of certain cyclotomic fields] {On the class number of certain cyclotomic fields}
\author{K. Chakraborty and P. Rao}
\address{K. Chakraborty, SRM University-AP, Amaravati 522240, Andhra Pradesh, India}.
\email{kalyan.c@srmap.edu.in}
\address{P. Rao, SRM University-AP, Amaravati 522240, Andhra Pradesh, India}.
\email{pratik\_rao@srmap.edu.in}
\keywords{Real quadratic field, Cyclotomic field, Maximal real subfield, class number}
\subjclass[2020] {11R29, 11R18, 11R80 }
\maketitle
\begin{abstract} 
We construct four infinite families of cyclotomic fields and show that each member of these families has class number a multiple of $3$. In fact, we show that the class number of the corresponding maximal real subfields has class number divisible by $3$. A construction by Kishi and Miyake \cite{MI} and a result of Yamaguchi \cite{YA} help us achieve this target. Finally, we produce computational evidence corroborating our results.
\end{abstract}

\maketitle
\section{Introduction } 
The study of class numbers of number fields occupies a central role in algebraic number theory. The divisibility of the class number of any number field helps understand the arithmetic nature of the class group. The problem of constructing explicit infinite families of number fields whose class number has prescribed divisibility properties has attracted considerable attention over the years. For quadratic fields, there are many interesting results related to this divisibility problem of quadratic fields, and the reader is referred to \cite{ACC1955, BH2024, CHYP2018, CH2023, H2021, H2022} for more details. 

The study of the divisibility of the class number of cyclotomic fields, in particular its maximal real subfields, has also attracted the attention of many researchers. In this context, Ankeny, Chowla, and Hasse \cite{ACH} established that for certain prime $p$, the class number of the maximal real subfield of $\mathbb{Q}(\zeta_{p})$, where $\zeta_{p}$ is a primitive $p$-th root of unity, is strictly greater than one. Subsequent developments and related results on class numbers of cyclotomic fields and quadratic fields have been surveyed by Bhand and Murty \cite{ABRM}. In another relevant work, Hoque and Saikia \cite{HKS} proved that the class group of the maximal real subfield of the cyclotomic field $\mathbb{Q}(\zeta_{4m})$ for certain integers $m$ is non-trivial. Recently, Chakraborty and Hoque \cite{CH2025} constructed four infinite families of maximal real subfields of cyclotomic fields whose class number is strictly greater than $1$. They showed that the class group of the maximal real subfields of the cycltomic fields $\mathbb{Q}(\zeta_{4m})$, for each square-free integer $m$ of the form $\{14(2n+1)\}^{2}+1, \{3(2n+1)\}^{2}+1, \{6(2n+1)\}^{2}-2$ with $n\geq 1$, is nontrivial. They also proved that $3$ divides the class number of the maximal real subfield of the cyclotomic field $\mathbb{Q}_(\zeta_{m})$ for square-free integer $m=3(4\times3^{n}-1)$, where $n$ is a positive integer with $n\equiv 0\pmod{3}$.

In the year $2000$, Kishi and Miyake \cite{MI} parameterized certain quadratic fields whose class number is divisible by $3$. In this classification, they chose suitable integers $s$ and $t$ to construct irreducible cubic polynomials whose discriminants determine quadratic fields. If the corresponding cubic field turns out to be unramified, then by Hilbert class field theory, the class group of the quadratic field has an element of order $3$. In a recent work, Banerjee et al. \cite{AC} used this parameterization technique and proved that $3$ divides the class number of $\mathbb{Q}(\sqrt{m})$ for $m=216000x^{3}+457200x^{3}+322580x+75866,~ 432y^{3}+1080y^{2}+900y+223,~ 40500k^{3}+89100k^{2}+65340k+16215$, where $x,y$ and $k$ are any positive integers. Motivated by the above developments, we revisit the problem of the $3-$divisibility of class numbers of the real quadratic fields and the maximal real subfields of cyclotomic fields. First, we will fix the notations, and then we will go into our work. The following notations will be in use throughout. 
\begin{align*}
m>1 &: \text{ an integer} .\\
\zeta_{m} &: \text{ A primitive $m$-th root of unity }.\\
\mathbb{Q}(\sqrt{m})&: \text{ A real quadratic extension of $\mathbb{Q}$ }.\\ 
h(m)&:  \text{The class number of $\mathbb{Q}(\sqrt{m})$ }.\\
h_{4m} &: \text{The class number of the cyclotomic field $\mathbb{Q}(\zeta_{4m})$ }.\\
h^{+}_{4m} &: \text{ The class number of the maximal real subfield $\mathbb{Q}(\zeta_{4m}+\zeta^{-1}_{4m})$ of} ~\mathbb{Q}(\zeta_{4m}).\\
\Delta_{f} &: \text{ Discriminant of the polynomial $f$}.\\
(a,b) &: \text{$gcd$ of two integers $a$ and $b$}. 
\end{align*}
In this manuscript, the goal is to show $3$ divides $h_{4m}$ for the following choices of $m$: 
\begin{equation*}
\tag{1.1}\label{family}
 m= \begin{cases} 
  14580k^{3}+4860k^{2}+540k-655 & \text{with } k\in \mathbb{N} \text{ and } k\not\equiv 1\pmod{5}, \\
  864n^{3}+432n^{2}+72n-23 & \text{with } n\in\mathbb{N}, \\ 
  3000564\ell^{3}+5000940\ell^{2}+2778300\ell+507297 & \text{with } \ell\in \mathbb{N}\setminus2\mathbb{N} \text{ and } \ell\not\equiv 1\pmod{7}, \\
  324r^{3}+324r^{2}+108r-231 & \textit{with } r>1 \text{ is an integer}. 
\end{cases}
\end{equation*}

The paper is organized as follows. In \S 2, we recall the necessary preliminaries, including the Kishi-Miyake criterion and Yamaguchi's result. Section \S 3 contains four auxiliary lemmas, which will play a crucial role in establishing our results. In \S 4, we prove main results of this manuscript. To prove these results, we first show that $3$ divides $h(m)$ by using the precise results derived in \S 3, then we use a result of Yamaguchi \cite{YA} to show that $3$ divides $h^{+}_{4m}$ and thus $3$ divides $h_{4m}$. The resulting cubic polynomials, discriminants, and associated quadratic fields are different from those arising in the earlier constructions. Consequently, our results complement the existing literature by enlarging the collection of explicit infinite families of cyclotomic fields with class numbers divisible by $3$.

The concluding section \S 5 contains computational evidence (via tables) corroborating our results. We have used PARI/GP version 2.17.3 \cite{pari} for performing the computations.

\section{Preliminaries}\label{sec: 2} 

The following result of Kishi and Miyake \cite{MI} helps construct an unramified, cyclic and cubic extension of a given quadratic field such that $3$ divides the class number of this quadratic field.

\begin{thma}\label{t1}
    Let $s$ and $t$ be coprime integers and define 
    \begin{equation}
        f(\theta)=\theta^{3} - st\theta - s^{2}.
    \end{equation}
    Assume that $f(\theta)$ is irreducible over $\mathbb{Q}$ and that its discriminant $\Delta_f$ is not a perfect square. Suppose that one of the following conditions is satisfied:\\
\hspace*{2.5cm}
$(2.A.1)$ \hspace*{0.5cm}$3\nmid t$ ; \\
\hspace*{2.5cm}
$(2.A.2)$ \hspace*{0.5cm}$3\mid t, st\not\equiv 3\pmod{9}, s\equiv t\pm 1\pmod{9}$; \\
\hspace*{2.5cm}
$(2.A.3)$ \hspace*{0.5cm}$3\mid t, st\equiv 3\pmod{9}, s\equiv t\pm 1\pmod{27}$. \\ Then the normal closure of $\mathbb{Q}(\beta)$, where $\beta$ is a root of $f$, is a cyclic cubic extension which is unramified over the quadratic field $\mathbb{K}=\mathbb{Q}(\sqrt{\Delta_{f}})$, in particular, $3$ divides the class number of $\mathbb{K}$. Conversely, every quadratic field whose class number is divisible by $3$ admits such an unramified cyclic cubic extension arising from suitable choices of $s$ and $t$.
\end{thma}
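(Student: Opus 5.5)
The strategy is to study the cubic field $L=\mathbb{Q}(\beta)$ and its normal closure $M$, where $\beta$ is a root of $f$. First, $\Delta_f=4s^3t^3-27s^4=s^3(4t^3-27s)$. Since $f$ is irreducible and $\Delta_f$ is not a square, $\mathrm{Gal}(M/\mathbb{Q})\cong S_3$. Hence $M/\mathbb{K}$, with $\mathbb{K}=\mathbb{Q}(\sqrt{\Delta_f})$, is cyclic of degree $3$. Its unramifiedness reduces to a local statement at each rational prime $p$: $M/\mathbb{K}$ is unramified above $p$ iff the inertia groups at $p$ in $S_3$ have no elements of order $3$. This is equivalent to requiring that $p$ not be totally ramified in $L$ (i.e. $p\mathcal{O}_L\neq\mathfrak{P}^3$). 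For $p\neq 3$ this amounts to $v_p(d_L)$ being even. Equivalently, the square-free part of $d_L$ equals that of $\Delta_f$, and $d_L$ equals $\mathrm{disc}(\mathbb{K})$ up to the standard relation $d_L=d_{\mathbb{K}}\cdot c^2$ with $c=1$. Infinite primes cause no trouble, since complex conjugation has order $\le 2$.

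For primes $p\ne 3$ the key step is a Newton polygon / Eisenstein-type analysis. If $p\mid s$, then $p\nmid t$ by coprimality. Reducing $f\bmod p$ gives $\theta^3$, so we consider $f(\theta)=\theta(\theta^2-st)-s^2$. Set $a=v_p(s)$. The Newton polygon of $f$ has vertices $(0,2a)$, $(1,a)$, $(3,0)$, so it has slopes $a$ and $a/2$. Since it is not a single segment of slope with denominator $3$, $p$ is not totally ramified. If instead $p\nmid s$ and $p$ divides $\Delta_f$, then $f\bmod p$ has a double root and a simple root (it cannot be a triple root, since a triple root $\alpha$ would force $3\alpha=0$ and $\alpha^3=s^2\neq0$ for $p\ne3$). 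Hence $p$ is not totally ramified. For $p=2$ the same arguments apply.

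The main obstacle is $p=3$, where the congruence conditions (2.A.1)–(2.A.3) enter. If $3\nmid t$ and $3\mid s$, the Newton polygon argument above still applies. If $3\nmid st$, then $f\equiv\theta^3-st\theta-s^2 \pmod 3$ with $st\not\equiv0$, and the triple-root case is impossible: it would require $f\equiv(\theta-\alpha)^3=\theta^3-\alpha^3$, forcing $st\equiv0$. When $3\mid t$ we have $f\equiv\theta^3-s^2\equiv(\theta-s)^3\pmod 3$. Here I would substitute $\theta=\phi+s'$, with $s'$ a suitable lift ($s'\equiv s$ modulo $9$ or $27$), and examine the Newton polygon or Eisenstein property of $g(\phi)=f(\phi+s')$ at $3$. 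The goal is to show that under $s\equiv t\pm1\pmod 9$ (resp. $\pmod{27}$ when $st\equiv3\pmod 9$), the constant term $g(0)=s'^3-s'^2t\cdot(s/s')-s^2$ has $3$-adic valuation large enough that $3$ is not totally ramified, or is unramified in $M/\mathbb{K}$. The fallback is to compare $v_3(\Delta_f)$ with $v_3(d_L)$ via the index. I expect this case split to be the delicate computation. The key identity to exploit is $s^3-s^2t-s^2=s^2(s-t-1)$, and similarly with the sign $+1$.

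The converse statement rests on two facts. Class field theory gives the unramified cyclic cubic extension $M/\mathbb{K}$, and $M/\mathbb{Q}$ is $S_3$. Choose a cubic subfield $L$ and an element $\beta\in L$ of trace zero whose norm is a square. One then scales $\beta$ so that its minimal polynomial takes the shape $\theta^3-st\theta-s^2$ with $(s,t)=1$. This follows Kishi–Miyake's parametrization, and I would cite \cite{MI} for this direction rather than reprove it.
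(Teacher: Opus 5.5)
Your proposal is incomplete. It correctly handles every prime except $3$ when $3\mid t$, which is the only case where conditions (2.A.2) and (2.A.3) are used, and it leaves that case unproved with a mechanism that would not work as stated. Note that the paper gives no proof of this statement: it is quoted from Kishi--Miyake \cite{MI}, so your argument has to stand on its own.

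\textbf{What is correct.} Since $\mathrm{Gal}(M/\mathbb{Q})\cong S_3$, $M/\mathbb{K}$ is unramified if and only if no rational prime is totally ramified in $L=\mathbb{Q}(\beta)$. Your Newton polygon argument for $p\mid s$ (vertices $(0,2a),(1,a),(3,0)$) is right, and it also works for $p=2,3$. Your double-root argument for $p\nmid 3s$ and the case $3\nmid st$ are also fine.

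\textbf{Two wrong side claims (not load-bearing).}
\begin{enumerate}
\item For $p\neq 3$, total ramification is tame and gives $v_p(d_L)=2$, so ``$v_p(d_L)$ even'' is not the criterion. The right criterion is $d_L=d_{\mathbb{K}}$; equality of square-free parts of $d_L$ and $\Delta_f$ holds automatically and says nothing.
\item For $3\mid t$ one has $f\equiv(\theta-1)^3\pmod 3$. This equals $(\theta-s)^3$ only when $s\equiv 1\pmod 3$. In general you must shift by $\epsilon s$, where $s\equiv t+\epsilon\pmod 9$ and $\epsilon=\pm1$.
\end{enumerate}

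\textbf{The gap.} In the case $p=3$, $3\mid t$ you only announce a ``delicate computation'' and propose making the constant term $f(\epsilon s)=\epsilon s^2(s-t-\epsilon)$ highly divisible by $3$. That quantity does not decide the question. Write
\[
g(\phi)=f(\phi+\epsilon s)=\phi^3+3\epsilon s\phi^2+s(3s-t)\phi+f(\epsilon s).
\]
What matters is $v_3(3s-t)$, and this equals $1$ exactly when $st\not\equiv 3\pmod 9$.

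For example, take $s=13$, $t=3$. Then $s\equiv t+1\pmod 9$, so $v_3(f(s))=2$, just as under (2.A.2). But $v_3(3s-t)=2$, so the Newton polygon of $g$ is a single segment of slope $-2/3$ and $3$ is totally ramified in $L$. Indeed $\Delta_f=-3^5\cdot 13^3$, so $\mathbb{K}=\mathbb{Q}(\sqrt{-39})$, whose class number is $4$. This is exactly why the modulus jumps to $27$ in (2.A.3).

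\textbf{How to close it.}
\begin{itemize}
\item Under (2.A.2), the Newton points $(1,1)$ and $(0,\ge 2)$ of $g$ produce a break, hence a segment of length $1$. So $f$ has a root in $\mathbb{Q}_3$ and $3$ is not totally ramified.
\item Under (2.A.3), substitute $\phi=3\psi$. This gives the monic integral polynomial $\psi^3+\epsilon s\psi^2+\tfrac{s(3s-t)}{9}\psi+\tfrac{f(\epsilon s)}{27}$. Its reduction modulo $3$ has $\psi^2$-coefficient $\overline{\epsilon s}=1\neq 0$, so it is not a cube $\psi^3-a^3$. Hence either it is irreducible (residue degree $3$), or Hensel's lemma splits $f$ over $\mathbb{Q}_3$. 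Either way $3$ is not totally ramified.
\end{itemize}
Without this analysis the forward direction is unproved. Citing \cite{MI} for the converse is acceptable.
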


The following result of Yamaguchi \cite{YA} gives a relation between $h(m)$ and $h^{+}_{4m}$.

\begin{thma}\label{t2}
          If $\phi\left(m\right)>4$, where $\phi$ is the Euler's totient function and $m$ is a positive integer, then $h(m)$ divides $h^{+}_{4m}$.
\end{thma}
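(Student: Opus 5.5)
The plan is to realize $K=\mathbb{Q}(\sqrt{m})$ as a subfield of $L=\mathbb{Q}(\zeta_{4m}+\zeta_{4m}^{-1})$ and compare class groups through the norm map $N_{L/K}\colon \mathrm{Cl}(L)\to\mathrm{Cl}(K)$, handling the $2$-part separately with cyclotomic units. One part of the argument is only sketched, and I say below exactly where.

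\textbf{Step 1: $K\subseteq L$.} The quadratic Gauss sum expresses $\sqrt{m}$ as a $\mathbb{Z}$-linear combination of powers of $\zeta_{4m}$, so $\sqrt{m}\in\mathbb{Q}(\zeta_{4m})$. Since $\sqrt{m}$ is real, it lies in $L$.

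\textbf{Step 2: class field theory.} Let $H$ be the Hilbert class field of $K$. The image of $N_{L/K}$ corresponds, via Artin reciprocity, to $\mathrm{Gal}(H/H\cap L)$. Hence
$$[\mathrm{Cl}(K):N_{L/K}\mathrm{Cl}(L)]=[H\cap L:K].$$
The field $H\cap L$ is abelian over $\mathbb{Q}$, since $L/\mathbb{Q}$ is abelian, and it is unramified over $K$. Therefore it lies in the genus field of $K$. Because $L$ is totally real, $H\cap L$ is exactly the real genus field $F$ of $K$, the compositum of the $\mathbb{Q}(\sqrt{p^*})$ for the prime discriminants $p^*$ dividing $\mathrm{disc}(K)$ with the appropriate sign condition. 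This gives
$$\frac{h(m)}{[F:K]}\ \Big|\ h^{+}_{4m}.$$
Since $[F:K]$ is a power of $2$, this already yields the odd part of $h(m)$ dividing $h^+_{4m}$. The full statement needs the missing factor $[F:K]$ as well.

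\textbf{Step 3: recovering the $2$-part.} This is the main obstacle, because $h(m)$ and $h^+_{4m}$ can be even. I would apply the same norm argument to the tower $F\subseteq L$, whose cyclotomic-unit structure I would then use. By Step 2 applied to $F$ in place of $K$, the group $N_{L/F}\mathrm{Cl}(L)$ has index $[H_F\cap L:F]$ in $\mathrm{Cl}(F)$. This reduces the problem to showing the following two things.

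1. $[F:K]\,h(m)$ divides $h(F)$ times the part of $h^+_{4m}$ not seen by the norm.

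2. For the extra $2$-power, I would use Sinnott's index formula
$$h^{+}_{4m}=2^{b}\,[E_L:C_L],$$
with $E_L$ the units and $C_L$ the cyclotomic units. Taking norms, $N_{L/K}(C_L)$ is generated by a power $\varepsilon^{c\,h(m)}$ of the fundamental unit $\varepsilon$ of $K$; this comes from Dirichlet's class number formula written with cyclotomic units. The exponent $c$ involves the Euler factors $\prod_{q\mid 4m}(1-\chi_K(q))$. I would then show that the quotient $E_L/C_L$ surjects onto a group of order divisible by the $2$-part of $h(m)$, via the map $E_L\to E_K/N_{L/K}(C_L)$.

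\textbf{Role of $\phi(m)>4$.} This hypothesis is what I expect to make the bookkeeping work. It excludes the small cases $m\in\{1,2,3,4,5,6,8,10,12\}$. In those cases $[L:K]\le 2$, and the norm of the cyclotomic units can fail to reach $\varepsilon^{h(m)}$ because of the missing Euler factors and the unit index $[E_L:E_K E_{\text{rel}}]$. So in Step 3 I would first check, using $\phi(m)>4$, that these local factors $(1-\chi_K(q))$ contribute exactly the $2$-power $[F:K]$ needed.

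I have not verified the $2$-adic count in Step 3. If it fails, the fallback is to invoke Yamaguchi's original computation \cite{YA} at that point.
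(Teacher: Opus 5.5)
The paper does not prove this statement; it quotes it from Yamaguchi \cite{YA}. Your Steps~1 and~2 are correct, and they amount to a complete proof of a weaker statement. The cokernel satisfies $\mathrm{Cl}(K)/N_{L/K}\mathrm{Cl}(L)\cong\mathrm{Gal}(F/K)$, where $F=H\cap L$ is the genus field of $K$ in the wide sense; it lies in $L$ because $|\mathrm{disc}(K)|$ divides $4m$. Hence $h(m)/[F:K]\mid h^{+}_{4m}$. Since $[F:K]$ is a power of $2$, the odd part of $h(m)$ divides $h^{+}_{4m}$ for every non-square $m>1$, with no use of $\phi(m)>4$. This weaker statement is all the paper needs, because Theorems~\ref{T1}--\ref{T4} only use $3\mid h(m)\Rightarrow 3\mid h^{+}_{4m}$.

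The gap is Step~3, and it cannot be closed, because the $2$-part of the statement as quoted is false. Take $m=15$, so $\phi(15)=8>4$. Then $2\mid h(15)$ (in fact $h(15)=2$): the prime $\mathfrak{p}$ with $\mathfrak{p}^2=(2)$ is not principal, since $x^2-15y^2=\pm2$ has no solution modulo $5$. But $\mathbb{Q}(\zeta_{60})$ has class number $1$ (it is on the Masley--Montgomery list), so $h^{+}_{60}=1$. Your own Step~2 explains why. Here $F=\mathbb{Q}(\sqrt{3},\sqrt{5})\subseteq L$ and $[F:K]=2$, so your bound $h(m)/[F:K]\mid h^{+}_{4m}$ is attained. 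No bookkeeping with cyclotomic units or Euler factors $1-\chi_K(q)$ can supply the missing factor $[F:K]$, and falling back on Yamaguchi would just mean relying on a misquoted result. Two versions are true. One is the odd-part statement. The other is full divisibility under the extra hypothesis $F=K$ (i.e.\ $\mathrm{disc}(K)$ is a single prime discriminant or a product of two negative ones); there the norm is surjective, so your Step~2 already gives $h(m)\mid h^{+}_{4m}$. Separately, Step~3 has Sinnott's formula inverted: it reads $[E_L:C_L]=2^{b}h^{+}_{4m}$. Also, the claim $[L:K]\le 2$ for the excluded $m$ is wrong: $m=8,10,12$ give $[L:K]=4$, and $m=1,4$ give $K=\mathbb{Q}$.
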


\section{Few Lemmas}
We begin by proving four lemmas to establish that $m$ is not a perfect square in $\mathbb{Z}$ corresponding to the four families considered in \eqref{family}, which will be useful in proving the main results in \S 4.

\begin{lemma}\label{first theorem}
   Let $m=14580k^{3}+4860k^{2}+540k-655$, where $k$ is a positive integer such that $k\not\equiv 1\pmod{5}$. Then $m$ is not a perfect square in $\mathbb{Z}$. 
\end{lemma}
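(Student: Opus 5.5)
The approach is to find a prime that divides $m$ exactly once, since an integer that is exactly divisible by a prime to an odd power cannot be a perfect square. The congruence condition $k\not\equiv 1\pmod 5$ points to the prime $5$. Note first that $m>0$ for all $k\geq 1$, since $14580+4860+540>655$. So the only issue is squareness, not sign.

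Factor the polynomial. Since $14580=540\cdot 27$ and $4860=540\cdot 9$, we have
\begin{equation*}
m=540\,g(k)-655, \qquad g(k)=27k^{3}+9k^{2}+k.
\end{equation*}
Both $540$ and $655$ are divisible by $5$, so $5\mid m$ for every $k$. If $m$ were a perfect square, we would then need $25\mid m$.

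Next I reduce modulo $25$. There $540\equiv 15$ and $655\equiv 5$, so
\begin{equation*}
m\equiv 15\,g(k)-5 = 5\bigl(3g(k)-1\bigr)\pmod{25}.
\end{equation*}
Thus $25\mid m$ if and only if $3g(k)\equiv 1\pmod 5$, that is, $g(k)\equiv 2\pmod 5$. Modulo $5$ we have $g(k)\equiv 2k^{3}+4k^{2}+k$. Checking the residues $k\equiv 0,1,2,3,4 \pmod 5$ gives $g(k)\equiv 0,2,4,3,1 \pmod 5$ respectively. Hence $g(k)\equiv 2\pmod 5$ exactly when $k\equiv 1\pmod 5$.

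Under the hypothesis $k\not\equiv 1\pmod 5$, we therefore get $5\mid m$ but $25\nmid m$. So the $5$-adic valuation of $m$ is $1$, which is odd, and $m$ is not a perfect square in $\mathbb{Z}$.

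There is no real obstacle here. The only step needing care is the residue table for $g(k)$ modulo $5$, which is a finite check.
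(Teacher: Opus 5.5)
Your proof is correct and is essentially the paper's argument. Both show $5\mid m$ and reduce modulo $25$ to get $m\equiv 5\cdot(\text{cubic in }k)\pmod{25}$, where the cubic vanishes modulo $5$ exactly when $k\equiv 1\pmod 5$. Your $3g(k)-1$ is congruent modulo $5$ to the paper's $k^{3}+2k^{2}+3k-1$, so writing $m=540\,g(k)-655$ is only a cosmetic repackaging.
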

\begin{proof}
   First, observe that each coefficient of $m$ is divisible by $5$. Hence 
   $$
   m\equiv 0\pmod{5}.
   $$
If possible, let $m=r^2$ for some integer $r$. Then $5\mid r^2$, and therefore $5\mid r$, which implies 
$$
25\mid r^2=m.
$$
Thus, a necessary condition for $m$ to be a perfect square is  
$$
m\equiv 0\pmod{25}.
$$
Reducing the coefficients modulo $25$ gives, 
$$
m\equiv 5(k^3+2k^2+3k-1)\pmod{25}.
$$
It follows that
$$
m\equiv 0\pmod{25} \iff k^3+2k^2+3k-1 \equiv 0 \pmod {5}.
$$
A simple calculation modulo $5$ verifies that 
$$
k^3+2k^2+3k-1\equiv 0 \pmod {5} \iff k\equiv 1\pmod{5}.
$$
This is a contradiction to the hypothesis that $k\not\equiv 1\pmod{5}$. Thus, the result. 
\end{proof}

\begin{lemma}\label{second theorem}
 Let $m=864n^{3}+432n^{2}+72n-23$, where $n$ is any positive integer. Then $m$ is not a perfect square in $\mathbb{Z}$.
\end{lemma}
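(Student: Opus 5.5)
The plan is a congruence obstruction, exactly in the spirit of the proof of Lemma~\ref{first theorem}. We look for a modulus at which every coefficient of $n^3$, $n^2$ and $n$ vanishes, so that $m$ is congruent to a constant independent of $n$. The coefficients $864=2^5\cdot 3^3$, $432=2^4\cdot 3^3$ and $72=2^3\cdot 3^2$ are all divisible by $8$. Hence
$$
m\equiv -23\equiv 1\pmod 8,
$$
which unfortunately is a quadratic residue, so the modulus $8$ gives no contradiction.

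We therefore use the factor $3^2$ instead. All three coefficients are divisible by $9$, so
$$
m\equiv -23\equiv 4 \pmod 9,
$$
which is again a square. Combining these gives $m\equiv -23\equiv 49\pmod{72}$, and $49=7^2$, so no single fixed modulus dividing $72$ works. We must therefore exploit the $n$-dependence. Observe that
$$
864n^3+432n^2+72n=(12n+2)^3/2-4\quad\text{roughly};
$$
more precisely, $(6n+1)^3=216n^3+108n^2+18n+1$, so
$$
864n^3+432n^2+72n=4(6n+1)^3-4,
$$
and hence
$$
m=4(6n+1)^3-27.
$$
The question becomes whether $y^2=4x^3-27$ has solutions with $x\equiv 1\pmod 6$.

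Putting $y=2Y+\ldots$ does not help directly, since $m$ is odd. Instead we try small primes $p$ at which $n\mapsto m$ misses the squares. Modulo $5$, $m\equiv 4n^3+2n^2+2n+2$. Evaluating at $n=0,\dots,4$ gives $2, 0, 2, 2, 2$ respectively. For instance, at $n=2$ we get $32+8+4+2=46\equiv 1$; recomputing carefully, $4\cdot 8=32\equiv 2$, $2\cdot 4=8\equiv 3$, $2\cdot 2=4$, so the total is $2+3+4+2=11\equiv 1$, a square. So the modulus $5$ fails at some residues. Modulo $7$, $m=4x^3-27$ with $x^3\in\{0,\pm1\}$, so $m\equiv 1,\,5,\,4$ (using $-27\equiv 1$). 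Here $5$ is a non-residue, but $1$ and $4$ are squares. Modulo $13$, cubes lie in $\{0,\pm1,\pm5\}$, and $m\equiv 4x^3-1$ takes the values $-1,3,-5,19\equiv 6,-21\equiv 5$. The quadratic residues mod $13$ are $\{1,3,4,9,10,12\}$, so $m\equiv 3$ or $12$ are residues.

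The main obstacle is that local obstructions may genuinely fail to exist: $y^2=4x^3-27$, i.e.\ $Y^2=X^3-432$ after setting $X=4x$ and $Y=4y$, is the Mordell curve isomorphic to $x^3+y^3=1$. That curve has rank $0$, and its only rational points are torsion, namely $(12,\pm36)$ and the point at infinity. The affine integer points correspond to $x=3$, $y=\pm 9$. Since $x=6n+1$ would need to equal $3$, which is impossible, $m$ is never a square.

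So the actual proof I would give is the algebraic identity $m=4(6n+1)^3-27$, followed by an appeal to the classical determination of integer (indeed rational) points on $Y^2=X^3-432$, equivalently Fermat's theorem for exponent $3$. If a fully elementary argument is desired, I would first search moduli $p$ with $p\equiv 1\pmod 3$ (such as $19$, $37$, and so on) combined with $n$ mod small numbers. However, I expect such a search to fail because of the existence of the rational point, so the elliptic-curve/Euler argument is the route I would commit to.
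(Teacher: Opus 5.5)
Your final argument is correct and is exactly the paper's proof. You rewrite $m=4(6n+1)^3-27$, rescale by $X=4x$, $Y=4y$ to the Mordell curve $Y^2=X^3-432$ (whose only rational points are $\mathcal{O}$ and $(12,\pm 36)$), and observe that $x=3$ is not of the form $6n+1$.

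The congruence explorations before it can be dropped; incidentally, your list of residues modulo $5$ is miscomputed. Your expectation that such a search must fail is right: $m$ is always a $2$-adic and $3$-adic square, and $6n+1$ can be made $p$-adically close to $3$ for every prime $p\ge 5$.
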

\begin{proof}
Note that, $m$ can be rewritten as
\begin{equation*}
m=4(6n+1)^{3}-27.
\end{equation*}
  If possible, let $m$ be a perfect square, i.e.
\begin{equation}\label{cp1}
y^{2}=4(6n+1)^{3}-27 
\end{equation}
for some $y$ in $\mathbb{Z}$. Let $x=6n+1$ and multiplying \eqref{cp1} by $16$, we get 
\begin{equation*}
(4y)^{2}=(4x)^{3}-432.
\end{equation*}
Further, substituting $Y=4y$ and $X=4x$, we get  
\begin{equation}\label{cp3}
Y^{2}=X^{3}-432
\end{equation} 
Here \eqref{cp3} represents a well known elliptic curve over $\mathbb{Q}$ which has rank $0$ and the torsion subgroup is $\{\mathcal{O},(12,\pm 36)\}$ as discussed in \cite{LCW}, where $\mathcal{O}$ represents the point at infinity. 

Thus $X=4x=12$ implies that $n=1/3$, which is a contradiction as $n$ is an integer. Thus, no such $y$ exists, and hence $m$ is not a perfect square.  
\end{proof}

\begin{lemma}\label{third theorem} Let $m=3000564\ell^{3}+5000940\ell^{2}+2778300\ell+507297$ where $\ell$ is positive integer. If $\ell$ is odd, then $m$ is not a perfect square.  
\end{lemma}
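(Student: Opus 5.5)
The plan is to exhibit a hidden cubic structure in $m$, as in Lemma \ref{second theorem}, and then finish with a simple $3$-adic valuation argument rather than an elliptic curve.

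\textbf{Factoring $m$.} I would first write the leading coefficient as
$$3000564=4\cdot 3^{4}\cdot 21^{3}=12\cdot 63^{3}.$$
This suggests comparing $m$ with $12(63\ell+a)^{3}$ and choosing $a$ to match the $\ell^{2}$ coefficient. Since $12\cdot 3\cdot 63^{2}=142884$ and $142884\cdot 35=5000940$, we take $a=35$. The $\ell$-coefficient then matches, because $12\cdot 3\cdot 63\cdot 35^{2}=2778300$. Comparing constants ($12\cdot 35^{3}=514500$) gives
$$m=12(63\ell+35)^{3}-7203.$$
Writing $63\ell+35=7(9\ell+5)$ and $7203=3\cdot 7^{4}$, this becomes
$$m=3\cdot 7^{3}\bigl(4x^{3}-7\bigr),\qquad x=9\ell+5.$$

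\textbf{Valuation argument.} Next I would compute the exact power of $3$ dividing $m$. Since $x=9\ell+5\equiv 2\pmod 3$, we have $x^{3}\equiv 2\pmod 3$. Hence
$$4x^{3}-7\equiv 8-7\equiv 1\pmod 3,$$
so $3\nmid 4x^{3}-7$. As $3\nmid 7^{3}$, the factor $3$ divides $m$ exactly once. But in a perfect square every prime occurs to an even power. So $m$ is not a perfect square; note also that $m>0$ for $\ell\ge 1$, so the question is not vacuous.

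The main obstacle is spotting the cubic completion in the first step; once $m=1029(4x^{3}-7)$ is in hand, the rest is a one-line congruence. The argument never uses that $\ell$ is odd. That hypothesis is presumably needed later, in applying Theorem \ref{t1} (for the coprimality or congruence conditions on $s,t$), not in this lemma, so the proof gives the slightly stronger statement for every positive integer $\ell$. If the factorization had not closed up so neatly, I would instead have tried reducing modulo $7^{4}$ or modulo $8$, using the parity of $\ell$.
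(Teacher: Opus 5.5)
Your proof is correct, and it takes a different route from the paper's.

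The paper reduces modulo $8$. The three non-constant coefficients are all $\equiv 4 \pmod 8$ and the constant term is $\equiv 1$, so for odd $\ell$ one gets $m\equiv 5\pmod 8$, which is not a square residue. This is the only place the parity of $\ell$ is used. That route genuinely needs it, since $m\equiv 1\pmod 8$ for even $\ell$.

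Your identity $m=3\cdot 7^{3}(4(9\ell+5)^{3}-7)$ is simply $\Delta_f/9$ for the choice $s=7$, $t=3(9\ell+5)$ of Theorem \ref{T3}. It gives $v_3(m)=1$ for every $\ell$, and hence proves the stronger statement.

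You do not even need the cubic completion. The coefficients $3000564$, $5000940$, $2778300$ have digit sums $18, 18, 27$, while $507297\equiv 3\pmod 9$. So $m\equiv 3\pmod 9$ for every $\ell$, which is the same one-line argument the paper uses for Lemma \ref{fourth theorem}.

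Your guess that oddness is needed later in the paper is wrong. In Theorem \ref{T3}:
\begin{itemize}
\item $\gcd(7,27\ell+15)=1$ is equivalent to $\ell\not\equiv 1\pmod 7$;
\item the congruences $st\equiv 6\pmod 9$ and $s\equiv t+1\pmod 9$ hold for all $\ell$;
\item the rational-root check does not involve parity.
\end{itemize}
So your argument shows the hypothesis ``$\ell$ odd'' can be removed from Theorem \ref{T3} altogether.
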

\begin{proof}
We reduce the expression $m$ modulo $8$ and get, 
\begin{equation}
    m\equiv 4\ell^{3}+4\ell^{2}+4\ell+1\pmod{8}. \label{cp2}
\end{equation}
Since $\ell$ is odd,
$$
\ell^{2}\equiv 1\pmod{8}, \ell^{3}\equiv\ell\pmod{8}.
$$
Substituting these congruences in \eqref{cp2}, we get 
$$
m\equiv 4\ell+4+4\ell+1=8\ell+5\equiv 5\pmod{8}.
$$
However, it is known that the quadratic residues modulo $8$ are $0,1$ and $4$. Therefore, $m$ cannot be a perfect square for any odd integer $\ell$.
\end{proof}

\begin{lemma}\label{fourth theorem}
Let $m=324r^{3}+324r^{2}+108r-231$, where $r>1$ is an integer. Then $m$ is not a perfect square.
\end{lemma}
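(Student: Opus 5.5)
The plan is to rewrite $m$ as a shifted cube, in the same spirit as the proof of Lemma \ref{second theorem}, and then get a contradiction from a $3$-adic congruence rather than from an elliptic curve.

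First, observe that $324r^{3}+324r^{2}+108r+12=12(27r^{3}+27r^{2}+9r+1)=12(3r+1)^{3}$. Hence
\begin{equation*}
m=12(3r+1)^{3}-243=3\bigl(4(3r+1)^{3}-81\bigr).
\end{equation*}
So $3\mid m$. If $m=y^{2}$ for some integer $y$, then $3\mid y$, and therefore $9\mid m$. This would force $3\mid 4(3r+1)^{3}-81$, that is, $3\mid 4(3r+1)^{3}$.

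This is impossible, since $3r+1\equiv 1\pmod 3$. Concretely, $4(3r+1)^{3}-81\equiv 1\pmod 3$, so $m\equiv 3\pmod 9$. But $3$ is not a quadratic residue modulo $9$, whose squares are $0,1,4,7$. Thus $m$ cannot be a perfect square.

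The only step requiring care is spotting the identity $m=12(3r+1)^{3}-243$. One could verify it by direct expansion, or check it at $r=0$ (giving $12-243=-231$) and compare the leading coefficients. Once the identity is in hand, the congruence argument is routine. It does not actually use the hypothesis $r>1$, so that hypothesis presumably matters only later in the paper, for instance for irreducibility or for the condition $\phi(m)>4$.
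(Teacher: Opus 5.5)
Your argument is correct and reaches the same congruence as the paper's proof, which also works modulo $9$: $m\equiv 3\pmod 9$, and $3$ is not a square modulo $9$. The identity $m=12(3r+1)^{3}-243$ is a harmless detour, since $324$ and $108$ are already divisible by $9$, so $m\equiv -231\equiv 3\pmod 9$ follows directly. You are also right that $r>1$ is not needed for this lemma; for $r=1$ the cubic $\theta^{3}-12\theta-9$ has the root $\theta=-3$, so the hypothesis is needed only for irreducibility.
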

\begin{proof}
    The proof proceeds along the same lines as in Lemma \ref{third theorem}. We use modulo $9$ in this case. 
\end{proof}    

\section{Class number of  certain cyclotomic fields}\label{sec: 4} 
In this section, we prove our main results. These results are concerned with the three divisibility of the class groups of certain cyclotomic fields. The fundamental steps to get these results are as follows. \\
$\bullet$ Consider $\mathbb{Q}(\sqrt{m})$ for $m$ as discussed in \S 3. \vspace*{2mm} \\
$\bullet$ Use Kishi and Miyake \cite{MI} to construct an unramified, cyclic and cubic extension of $\mathbb{Q}(\sqrt{m})$ and use it to conclude that $3$ divides $h(m)$. \vspace*{1mm} \vspace*{2mm} \\
$\bullet$ Use Yamaguchi \cite{YA} to conclude $3$ divides $h^{+}_{4m}$. \vspace*{2mm} \\
$\bullet$ Thus $3$ divides $h_{4m}$.

 \begin{theorem}\label{T1}
Let $m=14580k^{3}+4860k^{2}+540k-655$, where $k$ is a positive integer such that $k\not\equiv 1\pmod{5}$. Then $3$ divides $h_{4m}$. 
\end{theorem}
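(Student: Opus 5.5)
The plan is to apply Theorem \ref{t1} with a suitable pair $(s,t)$ chosen so that $\mathbb{Q}(\sqrt{\Delta_f})=\mathbb{Q}(\sqrt{m})$. Then Theorem \ref{t2} passes the $3$-divisibility up to $h^{+}_{4m}$, and finally to $h_{4m}$.

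\emph{Choosing $s$ and $t$.} For $f(\theta)=\theta^3-st\theta-s^2$ one has $\Delta_f=4s^3t^3-27s^4=s^3(4t^3-27s)$, so I want $m$ to be, up to a square factor, of the form $s(4t^3-27s)$. Every coefficient of $m$ is divisible by $5$, and
$$
\tfrac{m}{5}=2916k^3+972k^2+108k-131 = 4(9k+1)^3-135 = 4(9k+1)^3-27\cdot 5,
$$
using $(9k+1)^3=729k^3+243k^2+27k+1$. Hence I take $s=5$ and $t=9k+1$. Then $\Delta_f=5^3\cdot\tfrac{m}{5}=25m$, so $\mathbb{Q}(\sqrt{\Delta_f})=\mathbb{Q}(\sqrt m)$.

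\emph{Verifying the hypotheses of Theorem \ref{t1}.}
\begin{itemize}
\item Coprimality: $(5,9k+1)=1$ exactly when $9k+1\not\equiv 0\pmod 5$, i.e.\ $k\not\equiv 1\pmod 5$. This is precisely the hypothesis on $k$.
\item Condition $(2.A.1)$: it holds because $3\nmid 9k+1$.
\item Non-square discriminant: $\Delta_f=25m$ is not a perfect square because $m$ is not one, by Lemma \ref{first theorem}.
\item Irreducibility: for a monic cubic it suffices to rule out integer roots dividing $s^2=25$. One checks $f(\pm1), f(\pm5), f(\pm25)\neq 0$ for $k\ge1$. For instance, $f(5)=75-225k$ and $f(-5)=225k-125$, and $f(\pm25)=0$ would force $1125k\in\{15475,15525\}$, which is impossible.
\end{itemize}
Theorem \ref{t1} then gives $3\mid h(m)$.

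\emph{Passing to the cyclotomic field.} Since $m\ge 14580+4860+540-655$ is large, $\phi(m)>4$, and Theorem \ref{t2} gives $h(m)\mid h^{+}_{4m}$. Finally I invoke the standard fact that $h^{+}_{4m}$ divides $h_{4m}$. This holds because $\mathbb{Q}(\zeta_{4m})/\mathbb{Q}(\zeta_{4m}+\zeta_{4m}^{-1})$ is ramified at infinity, so the norm map on class groups is surjective. Hence $3\mid h_{4m}$.

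The main obstacle is finding the representation $m/5=4t^3-27s$; once that is spotted, everything else is routine. One small point needs care: Theorem \ref{t2} is stated for $h(m)$ with $m$ possibly non-squarefree. I would read $h(m)$ as the class number of $\mathbb{Q}(\sqrt m)$ throughout, consistent with the notation section.
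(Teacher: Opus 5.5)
Your proposal is correct and follows essentially the same route as the paper: the same choice $s=5$, $t=9k+1$, the same rational-root check for irreducibility, the identity $\Delta_f=25m$ combined with Lemma~\ref{first theorem}, condition $(2.A.1)$, and then Theorem~\ref{t2}. You also justify two steps the paper leaves implicit: why $\phi(m)>4$ holds, and why $h^{+}_{4m}\mid h_{4m}$ (the CM extension is ramified at infinity, so the norm map on class groups is surjective).
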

\begin{proof}
Let $s=5$ and $t=9k+1$. Clearly, $gcd(5, 9k+1)=1$ as $k\not\equiv 1\pmod{5}$. Let us set 
\begin{equation*}
f(\theta)=\theta^{3}-5(9k+1)\theta-25.
\end{equation*}
 We claim that $f$ is irreducible over $\mathbb{Q}$. Since $f$ is cubic, it suffices to prove that it has no rational root, because every reducible cubic polynomial over $\mathbb{Q}$ has a linear factor and hence a rational root.  Now, by the rational root test, the only possible rational roots of $f(\theta)$ are $\pm 1, \pm5$ and $\pm 25$. Direct calculations for $k\geq 1$ entails,
\begin{equation*}
f(1)= -45k-29\neq 0,
\end{equation*}
\begin{equation*}
f(-1)= 45k-21\neq 0,
\end{equation*}
\begin{equation*}
f(5)= 75-225k\neq 0,
\end{equation*}
\begin{equation*}
f(-5)= 225k-125\neq 0,
\end{equation*}
\begin{equation*}
f(25)= 15475-1125k\neq 0,
\end{equation*}
\begin{equation*}
 f(-25)= 1125k-15525\neq 0.
\end{equation*}
Therefore, $f$ has no rational root in $\mathbb{Q}$. Since $f$ is cubic, it follows that $f$ is irreducible over $\mathbb{Q}$. Furthermore, the discriminant of $f$,
\begin{equation*}
\Delta_{f}=4(5(9k+1))^{3}-27(-25)^{2},
\end{equation*}
can be written  as $(5)^2m$, where $m=14580k^{3}+4860k^{2}+540k-655$. Now lemma \ref{first theorem} confirms that $\Delta_{f}$ is not a perfect square. 

As $3\nmid t$, $f$ is irreducible over $\mathbb{Q}$ and $\Delta_{f}$ is not a perfect square, by Theorem \ref{t1}, we get that $3$ divides $h(m)$. Now we are in a position to apply Theorem \ref{t2} (since  $\phi(m)>4 ~\forall~ m$) to conclude that $3$ divides $h^{+}_{4m}$ and hence $3$ divides $h_{4m}$.
\end{proof}    
\begin{theorem}\label{T2}
Let $m=864n^{3}+432n^{2}+72n-23$, where $n$ is any positive integer. Then $3$ divides $h_{4m}$.
\end{theorem}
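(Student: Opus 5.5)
We follow the same template as Theorem \ref{T1}, using the factorization $m=4(6n+1)^{3}-27$ observed in Lemma \ref{second theorem}. This suggests taking $s=1$ and $t=6n+1$ in Theorem \ref{t1}, so that
\begin{equation*}
f(\theta)=\theta^{3}-(6n+1)\theta-1 .
\end{equation*}
Then $\gcd(s,t)=1$ trivially, and
\begin{equation*}
\Delta_f=4(st)^{3}-27s^{4}=4(6n+1)^{3}-27=m
\end{equation*}
(up to the sign convention used in the paper's computation for Theorem \ref{T1}). Hence $\mathbb{Q}(\sqrt{\Delta_f})=\mathbb{Q}(\sqrt{m})$ exactly, with no square factor to strip off.

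Next we verify the hypotheses of Theorem \ref{t1}.

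\emph{Irreducibility.} By the rational root test the only candidates are $\pm1$. We have $f(1)=-6n-1\neq0$ and $f(-1)=6n-1\neq 0$ for $n\ge1$. A cubic with no rational root is irreducible over $\mathbb{Q}$.

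\emph{Non-square discriminant.} This is exactly Lemma \ref{second theorem}. That lemma is the substantive input: it rests on the elliptic curve $Y^2=X^3-432$ having only the rational points $\mathcal{O}$ and $(12,\pm36)$, and $X=4(6n+1)=12$ is impossible for integral $n$.

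\emph{Congruence condition.} Since $t=6n+1\equiv1\pmod 3$, we have $3\nmid t$, so condition $(2.A.1)$ holds.

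Theorem \ref{t1} then gives an unramified cyclic cubic extension of $\mathbb{Q}(\sqrt{m})$, so $3\mid h(m)$.

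Finally we apply Theorem \ref{t2}. This requires $\phi(m)>4$. For $n\ge1$ we have $m\ge 864+432+72-23=1345$, and since $\phi(m)\le 4$ only for $m\le 12$, the condition holds. Hence $3\mid h(m)\mid h^{+}_{4m}$. Because $\mathbb{Q}(\zeta_{4m})/\mathbb{Q}(\zeta_{4m}+\zeta_{4m}^{-1})$ is a CM extension, $h^{+}_{4m}$ divides $h_{4m}$, and therefore $3\mid h_{4m}$.

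The only real obstacle is the non-squareness of $\Delta_f$. It cannot be settled by a simple congruence here, which is why Lemma \ref{second theorem} goes through the elliptic curve; everything else is routine. The one bookkeeping point to watch is the sign convention for $\Delta_f$. In the paper's convention it equals $4(st)^3-27s^4$, which is positive and equal to $m$, so the quadratic field is indeed the real field $\mathbb{Q}(\sqrt m)$.
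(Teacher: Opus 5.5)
Your proof is correct and follows the paper's argument: the same choice $s=1$, $t=6n+1$ in Theorem \ref{t1}, Lemma \ref{second theorem} for the non-squareness of $\Delta_f=m$, condition $(2.A.1)$, and then Theorem \ref{t2}. The only differences are minor: you prove irreducibility by the rational root test ($f(\pm1)\neq 0$), whereas the paper reduces $f$ modulo $3$ to $\theta^3-\theta-1$, which is irreducible over $\mathbb{F}_3$; you also verify $\phi(m)>4$ and $h^{+}_{4m}\mid h_{4m}$ explicitly, which the paper leaves implicit.
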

\begin{proof}
In this case, let $s=1$ and $t=6n+1$. Thus $\gcd(s,t)=1$ and we set:
\begin{equation*}
f(\theta):= \theta^{3}-(6n+1)\theta-1.
\end{equation*}
The discriminant of $f$ is 
\begin{equation*}
\Delta_{f}=4(6n+1)^{3}-27= 864n^{3}+432n^{2}+72n-23.
\end{equation*}
Clearly $\bar{f}(\theta)=\theta^{3}-\theta-1$ obtained by reducing $f$ modulo $3$, is irreducible over $\mathbb{Z}_{3}$ and thus $f$ is irreducible over $\mathbb{Q}$. Furthermore, by Lemma \ref{second theorem}, $\Delta_{f}$ is not a perfect square.

As before $3\nmid t$ and the application of Theorem \ref{t1} entails $3$ divides $h(m)$. Now by using Theorem \ref{t2}, we get $3$ divides $h_{4m}$ via the $3$-divisibility of $h^{+}_{4m}$. 

\end{proof}
    
\begin{theorem}\label{T3}
Let $m=3000564\ell^{3}+5000940\ell^{2}+2778300\ell+507297$, where $\ell$ is an odd  positive integer such that $\ell \not\equiv 1\pmod{7}$. Then $3$ divides $h_{4m}$. 
\end{theorem}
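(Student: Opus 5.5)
We apply Theorem \ref{t1} as in Theorems \ref{T1} and \ref{T2}, except that here $3\mid t$, so we must use condition $(2.A.2)$ rather than $(2.A.1)$. The first step is to rewrite $m$. The coefficient ratios $5000940/3000564=5/3$ and $2778300/3000564=25/27$ suggest completing the cube in $9\ell+5$. Since $3000564=4116\cdot 729$, $4116=12\cdot 7^{3}$, and $4116\cdot 125-7203=507297$, we get
\begin{equation*}
m=4116(9\ell+5)^{3}-7203=3\cdot 7^{3}\bigl(4(9\ell+5)^{3}-7\bigr).
\end{equation*}
For the Kishi--Miyake polynomial, $\Delta_f=4(st)^3-27s^4=s^{3}(4t^{3}-27s)$. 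Choosing $s=7$ and $t=3(9\ell+5)=27\ell+15$ gives
\begin{equation*}
\Delta_f=7^{3}\cdot 27\bigl(4(9\ell+5)^{3}-7\bigr)=3^{2}m .
\end{equation*}
Hence $\mathbb{Q}(\sqrt{\Delta_f})=\mathbb{Q}(\sqrt{m})$, and by Lemma \ref{third theorem} (using that $\ell$ is odd) $\Delta_f$ is not a perfect square.

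Next come the hypotheses of Theorem \ref{t1} for $f(\theta)=\theta^{3}-7(27\ell+15)\theta-49$.

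\emph{Coprimality.} We have $\gcd(7,t)=1$ iff $7\nmid 3(9\ell+5)$, i.e. $9\ell+5\not\equiv 0\pmod 7$. Since $9\ell+5\equiv 2\ell+5\pmod 7$, this holds exactly when $\ell\not\equiv 1\pmod 7$, which is the stated hypothesis.

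\emph{Condition $(2.A.2)$.} Clearly $3\mid t$. Also $st=189\ell+105\equiv 105\equiv 6\pmod 9$, so $st\not\equiv 3\pmod 9$. Finally $t\equiv 15\equiv 6\pmod 9$, so $s=7\equiv t+1\pmod 9$.

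\emph{Irreducibility.} By the rational root test the only candidates are $\pm1,\pm7,\pm49$. Direct evaluation gives linear expressions in $\ell$, such as $f(7)=294-1323\ell-735$ and $f(1)=-48-189\ell-105$. For each candidate one checks that the resulting value is nonzero for every positive integer $\ell$, using sign or divisibility considerations. Reduction modulo $2$ or $3$ will not do the job, since there $f$ has a root.

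With these in hand, Theorem \ref{t1} gives $3\mid h(m)$. Since $m$ is huge, $\phi(m)>4$, so Theorem \ref{t2} yields $3\mid h^{+}_{4m}$. Because $h^{+}_{4m}$ divides $h_{4m}$, we conclude $3\mid h_{4m}$.

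The main obstacle is the first step: finding the parametrization, and in particular recognizing that $t$ must be divisible by $3$, so that the delicate case $(2.A.2)$ is needed. Once $s=7$ and $t=27\ell+15$ are found, every remaining verification is a routine congruence or finite check.
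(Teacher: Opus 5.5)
Your proposal is correct and follows essentially the same route as the paper. Both use the choice $s=7$, $t=27\ell+15$, the rational root test for irreducibility, $\Delta_f=3^{2}m$ together with the mod-$8$ non-square lemma, condition $(2.A.2)$ of the Kishi--Miyake criterion, and then Yamaguchi's result. Your explicit factorization $m=3\cdot 7^{3}\bigl(4(9\ell+5)^{3}-7\bigr)$ is a nice addition, since it makes the identity $\Delta_f=3^{2}m$ transparent, whereas the paper simply asserts it.
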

\begin{proof}
Let $s=7$ and $t=27\ell+15$. Clearly, $s$ and $t$ thus chosen are co-prime to each other as $\ell\not\equiv 1\pmod{7}$. We consider the polynomial 
\begin{equation*}
f(\theta)=\theta^{3}-7(27\ell+15)\theta-49.
\end{equation*}
 By the rational root test, the only possible rational roots of $f(\theta)$ are $\pm1,~ \pm7,~\pm49$. By direct substitution, none of these values is a root of $f(\theta)$ for any $\ell\geq 1$. Therefore,  $f(\theta)$ has no rational root, and since $f(\theta)$ is a cubic polynomial, $f(\theta)$ is irreducible over $\mathbb{Q}$. The discriminant of $f$ is
\begin{equation*}
\Delta_{f}=4(7(27\ell+15))^{3}-27(-49)^{2},
\end{equation*}
which can be simplified as $(3)^2m$, where $m=3000564\ell^{3}+5000940\ell^{2}+2778300\ell+507297$. Moving forward, Lemma \ref{third theorem} gives that $m$ is not a perfect square in $\mathbb{Z}$ and so is $\Delta_{f}$. \\
In this case $3\mid t$, $st\not\equiv 3\pmod{9}$ and $s-t=-27\ell-8\equiv 1\pmod{9}$, and hence we can apply Theorem \ref{t1} to conclude that $3$ divides $h(m)$. As before, by the application of Theorem \ref{t2}, we conclude that $3$ divides $h_{4m}$.
\end{proof}

\begin{theorem}\label{T4}
Let $m=324r^{3}+324r^{2}+108r-231$, where $r>1$ is an integer. Then $3$ divides $h_{4m}$. 
\end{theorem}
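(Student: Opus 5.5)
The plan is to follow the template of Theorems \ref{T1}--\ref{T3}: find coprime integers $s,t$ such that the discriminant of $f(\theta)=\theta^{3}-st\theta-s^{2}$ equals a square times $m$. Then Theorem \ref{t1} gives $3\mid h(m)$, and Theorem \ref{t2} lifts this to $h^{+}_{4m}$ and hence to $h_{4m}$.

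\textbf{Choice of $s,t$.} The key observation is
$$324r^{3}+324r^{2}+108r+12=12(3r+1)^{3},$$
so that
$$m=12(3r+1)^{3}-243=3\bigl(4(3r+1)^{3}-81\bigr).$$
Take $s=3$ and $t=3r+1$, so $f(\theta)=\theta^{3}-3(3r+1)\theta-9$. Then
$$\Delta_f=4\cdot 27(3r+1)^{3}-27\cdot 81=27\bigl(4(3r+1)^{3}-81\bigr)=3^{2}m.$$
Clearly $\gcd(3,3r+1)=1$ and $3\nmid t$, so we are in case $(2.A.1)$ of Theorem \ref{t1}.

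\textbf{Irreducibility.} By the rational root test the only candidate roots are $\pm1,\pm3,\pm9$, and I will evaluate $f$ at each:
\begin{itemize}
\item $f(1)=-9r-11$ and $f(-1)=9r-7$ are never zero.
\item $f(3)=9-27r$ is never zero for an integer $r$.
\item $f(9)=639-243r$ and $f(-9)=243r-657$ are never zero, since $243\nmid 639$ and $243\nmid 657$.
\item $f(-3)=27r-27$ vanishes exactly when $r=1$.
\end{itemize}
This is precisely why the hypothesis $r>1$ is imposed. Since a reducible cubic has a rational root, $f$ is irreducible over $\mathbb{Q}$ for $r>1$.

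\textbf{Non-square discriminant and conclusion.} Lemma \ref{fourth theorem} shows $m$ is not a perfect square. Concretely, $m\equiv -231\equiv 3\pmod 9$, while the squares modulo $9$ are $0,1,4,7$. Hence $\Delta_f=3^{2}m$ is not a square either, and Theorem \ref{t1} yields $3\mid h(m)$. Finally, $m\ge 324\cdot 8+324\cdot 4+216-231>0$ is large, so $\phi(m)>4$, and Theorem \ref{t2} gives $h(m)\mid h^{+}_{4m}$. Since $h^{+}_{4m}\mid h_{4m}$, we conclude $3\mid h_{4m}$.

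The only real obstacle is spotting the factorization $m=12(3r+1)^3-3^5$ that dictates $s=3$; after that, the check for the exceptional root $-3$ at $r=1$ is the one delicate point.
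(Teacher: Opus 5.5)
Your proposal follows the paper's proof: $s=3$, $t=3r+1$, $\Delta_f=3^{2}m$, Lemma~\ref{fourth theorem} via $m\equiv 3\pmod 9$, the rational root test over $\pm1,\pm3,\pm9$, case (2.A.1) of Theorem~\ref{t1}, then Theorem~\ref{t2}. Your remark that $f(-3)=27(r-1)$ explains the hypothesis $r>1$, which the paper leaves implicit. One small slip: at $\pm 9$ you used the coefficient $9(3r+1)$ instead of $3(3r+1)$, and the correct values are $f(9)=693-81r$ and $f(-9)=81r-711$; since $81\nmid 693$ and $81\nmid 711$, the conclusion is unchanged.
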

\begin{proof}
Let $s=3$ and $t=3r+1$ and hence $(s,t)=1$. We set, 
\begin{equation*}
f(\theta)=\theta^{3}-3(3r+1)\theta-9,
\end{equation*}
and thus 
\begin{equation*}
\Delta_{f}=4(3(3r+1))^{3}-27(-9)^{2}.
\end{equation*}
As before $\Delta_{f}$ can be expressed as $(3)^2m$, where $m=324r^{3}+324r^{2}+108r-231$. By Lemma \ref{fourth theorem}, $\Delta_{f}$ is not a perfect square in $\mathbb{Z}$. By the rational root test, the only possible rational roots are $\pm1,~\pm3,~\pm9$. A direct computation shows that none of these values is a root of $f(\theta)$ for any $r>1$. Hence $f(\theta)$ has no rational root and, therefore, $f(\theta)$ is irreducible over $\mathbb{Q}$ because $f(\theta)$ is a cubic polynomial over $\mathbb{Q}$. Since $3\nmid t$, following Theorem \ref{T1} the result follows. 
\end{proof}

\newpage
\section{Numerical Illustrations}
In this section, we present some numerical examples that support our results in §4. It is suﬃcient to compute the class numbers of each of the families of underlying real quadratic fields, i.e., $h(m)$'s. We compute $h(m)$ for the first twenty values of $m$ from each family. The Tables $1,2,3$ and $4$ provide numerical validation of Theorems \ref{T1} , \ref{T2} , \ref{T3} and \ref{T4} respectively. \vspace*{9mm}

\noindent
\begin{minipage}{0.48\textwidth}
\centering
\renewcommand{\arraystretch}{1}
\begin{tabular}{|>{\centering\arraybackslash}p{1cm}
                |>{\centering\arraybackslash}p{3cm}
                |>{\centering\arraybackslash}p{1cm}|}
\hline
$k$ & $m$ & $h(m)$ \\
\hline
2 & 136505 & 6 \\
3 & 438365 & 36 \\
4 & 1012385 & 12 \\
5 & 1946045 & 24 \\
7 & 5242205 & 108 \\
8 & 7779665 & 6 \\
9 & 11026685 & 18 \\
10 & 15070745 & 18 \\
12 & 25899905 & 72 \\
13 & 32859965 & 54 \\
14 & 40966985 & 6 \\
15 & 50308445 & 24 \\
17 & 73044605 & 24 \\
18 & 86614265 & 24 \\
19 & 101768285 & 6 \\
20 & 118594145 & 12 \\
22 & 157611305 & 12 \\
23 & 179977565 & 18 \\
24 & 204365585 & 48 \\
25 & 230862845 & 6 \\
\hline
\end{tabular}
\captionof{table}{Theorem \ref{T1}}
\end{minipage}\hfill
\renewcommand{\arraystretch}{1}
\begin{minipage}{0.48\textwidth}
\centering
\begin{tabular}{|>{\centering\arraybackslash}p{1cm}
                |>{\centering\arraybackslash}p{3cm}
                |>{\centering\arraybackslash}p{1cm}|}
\hline
$n$ & $m$ & $h(m)$ \\
\hline
1 & 1345 & 6 \\
2 & 8761 & 27 \\
3 & 27409 & 3 \\
4 & 62473 & 3 \\
5 & 119137 & 42 \\
6 & 202585 & 6 \\
7 & 318001 & 39 \\
8 & 470569 & 9 \\
9 & 665473 & 21 \\
10 & 907897 & 3 \\
11 & 1203025 & 18 \\
12 & 1556041 & 3 \\
13 & 1972129 & 3 \\
14 & 2456473 & 9 \\
15 & 3014257 & 3 \\
16 & 3650665 & 168 \\
17 & 4370881 & 18 \\
18 & 5180089 & 3 \\
19 & 6083473 & 3 \\
20 & 7086217 & 9 \\
\hline
\end{tabular}
\captionof{table}{Theorem \ref{T2}}
\end{minipage}

\noindent
\begin{minipage}{0.48\textwidth}
\centering
\begin{tabular}{|>{\centering\arraybackslash}p{1cm}
                |>{\centering\arraybackslash}p{3cm}
                |>{\centering\arraybackslash}p{1cm}|}
\hline
$\ell$ & $m$ & $h(m)$ \\
\hline
3 & 134865885 & 48 \\
5 & 514492797 & 12 \\
7 & 1294194909 & 18 \\
9 & 2617999293 & 12 \\
11 & 4629933021 & 84 \\
13 & 7474023165 & 24 \\
17 & 16234780989 & 96 \\
19 & 22439502813 & 72 \\
21 & 30052489341 & 234 \\
23 & 39217767645 & 48 \\
25 & 50079364797 & 12 \\
27 & 62781307869 & 6 \\
31 & 94282340061 & 672 \\
33 & 113369483325 & 288 \\
35 & 134873080797 & 24 \\
37 & 158937159549 & 72 \\
39 & 185705746653 & 168 \\
41 & 215322869181 & 24 \\
45 & 283678828797 & 1296 \\
47 & 322705720029 & 192 \\
\hline
\end{tabular}
\captionof{table}{Theorem \ref{T3}}
\end{minipage}\hfill
\begin{minipage}{0.48\textwidth}
\centering
\begin{tabular}{|>{\centering\arraybackslash}p{1cm}
                |>{\centering\arraybackslash}p{3cm}
                |>{\centering\arraybackslash}p{1cm}|}
\hline
$r$ & $m$ & $h(m)$ \\
\hline
2 & 3873 & 3 \\
3 & 11757 & 3 \\
4 & 26121 & 3 \\
5 & 48909 & 12 \\
6 & 82065 & 18 \\
7 & 127533 & 12 \\
8 & 187257 & 12 \\
9 & 263181 & 24 \\
10 & 357249 & 3 \\
11 & 471405 & 12 \\
12 & 607593 & 12 \\
13 & 767757 & 3 \\
14 & 953841 & 12 \\
15 & 1167789 & 72 \\
16 & 1411545 & 12 \\
17 & 1687053 & 9 \\
18 & 1996257 & 33 \\
19 & 2341101 & 24 \\
20 & 2723529 & 3 \\
21 & 3145485 & 24 \\
\hline
\end{tabular}
\captionof{table}{Theorem \ref{T4}}
\end{minipage}

\vspace*{0.7cm}
\textit{Remark:} The condition $k\not\equiv 1\pmod{5}$ in Theorem \ref{T1} is required in our proof. However, our computation suggests that for $k=1,6,11$, and $16$, we obtain $h(m)=6$, even though $k\equiv 1\pmod{5}$. At present, we do not have a theoretical proof that the congruence condition can be removed, and therefore we leave this as an open question for future investigation.

\section*{Acknowledgments}
The authors are thankful to Kalyan Banerjee and Azizul Hoque for many fruitful discussions throughout this work. The authors are indebted to the referee for his/her carefully reading the manuscript and suggesting many suitable changes. These have considerably enhanced the presentation of the manuscript. The authors also express their gratitude to SRM University-AP for providing the academic environment, resources, and institutional support to carry out this research.


\vspace*{2mm}
\begin{thebibliography}{10}
\vspace*{2mm}
\bibitem{ACC1955} N. C. Ankeny and S. Chowla, {\it On the divisibility of the class number of quadratic fields}, Pacific J. Math. {\bf 5} (1955), 321--324.
\bibitem{BH2024} K. Banerjee and A. Hoque, {\it Chow groups, pull back and class groups}, Monatsh. Math. {\bf 205} (2024), no. 3, 433--454.
\bibitem{AC} K. Banerjee, A. Chutia, and A. Hoque, {\it On the simultaneous $3 $-divisibility of class numbers of quadruples of real quadratic fields}, Hardy-Ramanujan J. {\bf 48}, 1-9.
\bibitem{CHYP2018} K. Chakraborty, A. Hoque, Y. Kishi, and P. P. Pandey, {\it Divisibility of the class numbers of imaginary quadratic fields}, J. Number Theory {\bf 185} (2018), 339--348.
\bibitem{CH2023} K. Chakraborty and A. Hoque, {\it Lehmer sequence approach to the divisibility of class numbers of imaginary quadratic fields}, Ramanujan J. {\bf 60} (2023), no. 4, 913--923.
\bibitem{CH2025} K. Chakraborty and A. Hoque, {\it On the plus parts of the class numbers of cyclotomic fields}, Chinese Annals Math. Series B {\bf 46} (2025), no. 2, 1--10. 
\bibitem{HKS} A. Hoque and H. K.  Saikia, {\it On the class number of the maximal real subfield of a cyclotomic field}, Quaest. Math. {\bf 39} (2016), no. 7, 889--894.

\bibitem {AK} K. Chakraborty and A. Hoque, {\it Pell-type equations and class number of the maximal real subfield of a cyclotomic field}, Ramanujan J. {\bf 46} (2018), no 3, 727-742.
\bibitem{H2021} A. Hoque, {\it On the exponents of class groups of some families of imaginary quadratic fields}, Mediterr. J. Math. {\bf 18} (2021), no. 4, Paper No. 153, 13 pp.
\bibitem{H2022} A. Hoque, {\it On a conjecture of Iizuka}, J. Number Theory {\bf 238} (2022), 464--473.
\bibitem{MI} Y. Kishi and K. Miyake, {\it Parametrization of the quadratic fields whose class number is divisible by three}, J. Number theory  {\bf 80} (2000), 209-217.

\bibitem{MA} D. A. Marcus, Number fields, 2nd ed., Springer, Switzerland, 2018.
\bibitem{YA} I. Yamaguchi, {\it  On the class-number of the maximal real subfield of a cyclotomic field}, J.Reine Angew. Math. {\bf 272} (1975), 217-220.

\bibitem{LCW} L. C. Washington, Elliptic Curves: Number Theory and Cryptography, 2nd ed., Chapman and Hall/CRC, 2008.

\bibitem{pari}
The PARI Group,
\textit{PARI/GP Computer Algebra System}, version 2.17.3, 2025.\\
Available at: \texttt{https://pari.math.u-bordeaux.fr/}

\bibitem{ACH} N.C. Ankeny, S. Chowla, and H. Hasse, {\it On the class number of the maximal real subfield of a cyclotomic field}, J. Reine Angew. Math., 217 (1965), 217-220. 

\bibitem{ABRM} A. Bhand and M. Ram Murty, {\it Class numbers of quadratic fields}, Hardy-Ramanujan J. 42 (2019), 17–25. 
                    
\end{thebibliography}
\end{document}